\documentclass[oneside,english,reqno]{amsart}
\usepackage[T1]{fontenc}
\usepackage[latin9]{inputenc}
\setcounter{tocdepth}{1}
\usepackage{babel}
\usepackage{amsthm}
\usepackage{amssymb}
\usepackage{graphicx}
\usepackage[unicode=true,pdfusetitle,
 bookmarks=true,bookmarksnumbered=false,bookmarksopen=false,
 breaklinks=false,pdfborder={0 0 1},backref=false,colorlinks=false]
 {hyperref}
\usepackage{breakurl}

\makeatletter
\numberwithin{equation}{section}
\numberwithin{figure}{section}
\newenvironment{lyxlist}[1]
{\begin{list}{}
{\settowidth{\labelwidth}{#1}
 \setlength{\leftmargin}{\labelwidth}
 \addtolength{\leftmargin}{\labelsep}
 }}
{\end{list}}
\theoremstyle{plain}
\newtheorem{thm}{\protect\theoremname}[section]
  \theoremstyle{definition}
  \newtheorem{defn}[thm]{\protect\definitionname}
  \theoremstyle{plain}
  \newtheorem{prop}[thm]{\protect\propositionname}
  \theoremstyle{plain}
  \newtheorem{algorithm}[thm]{\protect\algorithmname}
  \theoremstyle{remark}
  \newtheorem{acknowledgement}[thm]{\protect\acknowledgementname}


\newcommand{\gph}{\mbox{\rm Graph}}

\makeatother

  \providecommand{\acknowledgementname}{Acknowledgement}
  \providecommand{\algorithmname}{Algorithm}
  \providecommand{\definitionname}{Definition}
  \providecommand{\propositionname}{Proposition}
\providecommand{\theoremname}{Theorem}

\begin{document}
\title[Nonconvex inequality problems]{Finitely convergent algorithm for nonconvex inequality problems}

\subjclass[2010]{90C30, 90C59, 47J25, 47A46, 47A50, 49J53, 65K10.}
\begin{abstract}
We extend Fukushima's result on the finite convergence of an algorithm
for the global convex feasibility problem to the local nonconvex case.
\end{abstract}

\author{C.H. Jeffrey Pang}

\curraddr{Department of Mathematics\\ 
National University of Singapore\\ 
Block S17 08-11\\ 
10 Lower Kent Ridge Road\\ 
Singapore 119076 }

\email{matpchj@nus.edu.sg}

\keywords{feasibility problems, alternating projections, supporting halfspace,
finite convergence.}

\date{\today{}}

\maketitle
\tableofcontents{}

\section{Introduction}

Let $X$ be a Hilbert space. We consider the \emph{Nonconvex Inequality
Problem }(NIP) 
\begin{equation}
\mbox{(NIP):}\quad\mbox{ For }f:X\to\mathbb{R}\mbox{, find }x\in\mathbb{R}^{n}\mbox{ s.t. }f(x)\leq0.\label{eq:CIP}
\end{equation}
In \cite{Fukushima82}, Fukushima proposed a simple \emph{global}
algorithm when $X=\mathbb{R}^{n}$ for the Convex Inequality Problem
(which is the NIP with the additional requirement that $f(\cdot)$
is convex) that converges to some point $\bar{x}$ such that $f(\bar{x})\leq0$
if the Slater condition (i.e., the existence of a point $x^{*}$ satisfying
$f(x^{*})<0$) is satisfied. The ideas can be easily extended to the
case when $X$ is a Hilbert space, and the function $f(\cdot)$ need
not be smooth. In this paper, we make use of tools in nonsmooth and
variational analysis \cite{Cla83,Mor06,RW98} to prove a local result
on the case where $f(\cdot)$ is nonconvex. 

We now discuss some problems related to the NIP. In the case where
$f(\cdot)$ can be written as a maximum of finitely many smooth functions,
a variant of the Newton method converges superlinearly, and global
convergence is possible when $f(\cdot)$ is the maximum of finitely
many smooth convex functions. We refer to the references stated in
\cite{Fukushima82} for more details. (It appears that \cite{DePierroIusem88}
have obtained similar results independently.)

In \cite{Robinson_76_CIP}, Robinson considered the \emph{$K$-Convex
Inequality Problem} (KCIP), which is a generalization of the (CIP).
For $f:\mathbb{R}^{n}\to\mathbb{R}^{m}$, and a closed convex cone
$K\subset\mathbb{R}^{m}$, we write $y_{1}\leq_{K}y_{2}$ if $y_{2}-y_{1}\in K$.
The KCIP is defined by 
\begin{equation}
\mbox{(KCIP):}\mbox{ For }f:\mathbb{R}^{n}\to\mathbb{R}^{m}\mbox{ and }C\subset\mathbb{R}^{n}\mbox{, find }x\in C\mbox{ s.t. }f(x)\leq_{K}0.\label{eq:KCIP}
\end{equation}
Robinson's algorithm in \cite{Robinson_76_CIP} for the CIP can be
described as follows: At each iterate $x_{i}$, a subgradient $y_{i}\in\partial f(x_{i})$
is obtained, and the halfspace 
\begin{equation}
H_{i}^{\leq}:=\{x\in\mathbb{R}^{n}\mid f(x_{i})+\langle y_{i},x-x_{i}\rangle\leq0\}\label{eq:halfspace-type}
\end{equation}
 contains $f^{-1}((-\infty,0])$. The next iterate $x_{i+1}$ is obtained
by projecting $x_{i}$ onto $H_{i}^{\leq}$. Assuming regularity and
convexity (and no smoothness), Robinson proved that the algorithm
for the KCIP converges at least linearly. With smoothness, superlinear
convergence can be expected.

Modifications for a finitely convergent algorithm for the NIP can
be traced back to \cite{Polak_Mayne79,Mayne_Polak_Heunis81}, where
$f(\cdot)$ is a maximum of finitely many smooth functions. The main
idea for obtaining finite convergence under the Slater condition can
be described as follows. Instead of trying to find $x$ such that
$f(x)\leq0$, an infinite sequence $\{\epsilon_{i}\}$ of positive
numbers is introduced, and one tries to find $x$ satisfying
\[
f(x)\leq-\epsilon_{i}
\]
 in the $i$th iteration. The contribution in \cite{Fukushima82}
is to show that the smoothness conditions can be dropped. For more
recent work, we refer the reader to \cite{BauschkeWangWangXu14,CensorChenPajoohesh11,Crombez04}
and the references therein.

A problem related to the NIP is the Set Intersection Problem (SIP).
For sets $K_{1},\dots,K_{r}$ in a Hilbert space $X$, the SIP is
stated as: 
\begin{equation}
\mbox{(SIP):}\quad\mbox{Find }x\in K:=\bigcap_{i=1}^{r}K_{i}\mbox{, where }K\neq\emptyset.\label{eq:SIP}
\end{equation}
The SIP can be seen as a particular case of the NIP: Take $f(\cdot)$
to be $\max_{i=1,\dots,r}d(x,K_{i})$. A common method of solving
such problems is the method of alternating projections, which typically
has linear convergence even in convex problems. There has been recent
interest in nonconvex problems \cite{LewisMalick08,LLM09_lin_conv_alt_proj},
where the research is focused on conditions for the linear convergence
of the method of alternating projections and its variants.

We also remark that the NIP is related to filter methods for nonlinear
programming \cite{FletcherLeyffer02MAPR}.

\subsection{Contributions of this paper}

In this paper, we prove a local result on the finite convergence of
an algorithm for the NIP \eqref{eq:CIP} when $f(\cdot)$ is approximately
convex \cite{NgaiLucThera00,DaniilidisGeorgiev04} (See Definition
\ref{def:approx-convex} and the subsequent commentary) and $X$ is
a Hilbert space.

\subsection{Notation}

Let $X$ be a Hilbert space, and let $x\in X$ and $S\subset X$.
The following notation we will use is quite standard. 
\begin{lyxlist}{00.00.0000}
\item [{$\mathbb{B}(x,r)$}] The closed ball with center $x$ and radius
$r$.
\item [{$d(x,S)$}] The distance from $x$ to $S$. 
\end{lyxlist}

\section{Preliminaries}

In this section, we provide the necessary background in variational
analysis for the proof of our algorithm for the NIP. We first recall
the Clarke subdifferential.
\begin{defn}
\label{def:Clarke-subdif}(Clarke subdifferential) Let $X$ be a Hilbert
space. Consider a function $f:X\rightarrow\mathbb{R}$ locally Lipschitz
at a point $\bar{x}\in X$. The \emph{Clarke (generalized) subdifferential}
of $f$ at $\bar{x}\in X$ is defined by 
\begin{equation}
\partial f(\bar{x}):=\{x^{*}\in X^{*}:\langle x^{*},d\rangle\leq f^{0}(\bar{x};d)\mbox{ for all }d\in X\},\label{eq:Clarke-subdif}
\end{equation}
where $f^{0}(\cdot;\cdot)$ is the \emph{Clarke (generalized) directional
derivative }defined by 
\begin{equation}
f^{0}(\bar{x};d):=\limsup_{(y,t)\to(x_{0},0^{+})}\frac{f(y+td)-f(y)}{t}.\label{eq:Clarke-dir-diff}
\end{equation}

\end{defn}
We now describe the nonconvex functions for which we are able to prove
finite convergence of our algorithm. 
\begin{defn}
\cite{NgaiLucThera00}\label{def:approx-convex} (Approximate convexity)
Let $X$ be a Hilbert space and $f:X\to\mathbb{R}$. We say that $f(\cdot)$
is \emph{approximately convex at $\bar{x}$} if for every $\epsilon>0$,
there exists $\delta>0$ such that 
\[
f(y)\geq f(x)+\langle s,y-x\rangle-\epsilon\|y-x\|\mbox{ for all }x,y\in\mathbb{B}(\bar{x},\delta)\mbox{ and }s\in\partial f(x).
\]

\end{defn}
The notion of weak convexity in \cite{Vial83} (see also \cite{HiriartUrruty1984}
and the references therein) was a precursor to the notion of approximate
convexity in \cite{NgaiLucThera00}. The definition of approximate
convexity above is different from its usual definition, but is equivalent
by \cite[Theorem 1]{DaniilidisGeorgiev04}. In the case where $X=\mathbb{R}^{n}$,
approximate convexity is equivalent to $f(\cdot)$ being lower-$\mathcal{C}^{1}$
\cite{DaniilidisGeorgiev04,Spingarn81,AusselDaniilidisThibault04}.
Lower-$\mathcal{C}^{1}$ functions include the pointwise maximum of
a finite number of $\mathcal{C}^{1}$ functions. We refer to \cite[Section 10G]{RW98}
and the references therein for a discussion on lower-$\mathcal{C}^{1}$
functions, and more generally, subsmooth functions.

We now recall metric regularity.
\begin{defn}
(Metric regularity) Let $S:X\rightrightarrows Y$ be a set-valued
map. We say that $S(\cdot)$ is metrically regular at $(\bar{x},\bar{y})$
if there exist a constant $\kappa\geq0$ and neighborhoods $U$ of
$\bar{x}$ and $V$ of $\bar{y}$ such that 
\[
d\big(x,S^{-1}(u)\big)\leq\kappa d\big(u,S(x)\big)\mbox{ for all }x\in U\mbox{ and }u\in W.
\]

\end{defn}
We now make a claim about locally Lipschitz functions.
\begin{prop}
\label{prop:Metric-reg-epigph}(Metric regularity of epigraphical
maps) Let $X$ be a Hilbert space, and let $f:X\to\mathbb{R}$ be
locally Lipschitz at $\bar{x}\in X$. If $0\notin\partial f(\bar{x})$,
then the epigraphical map $E:X\rightrightarrows\mathbb{R}$ defined
by $E(x):=[f(x),\infty)$ is metrically regular at $(\bar{x},f(\bar{x}))$.\end{prop}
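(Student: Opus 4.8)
The plan is to unwind metric regularity for this particular $E$ and reduce the assertion to a one-sided error bound for the sublevel sets of $f$. Since $E^{-1}(u)=\{x : f(x)\le u\}=f^{-1}((-\infty,u])$ and, for $u\in\mathbb{R}$, $d(u,E(x))=d(u,[f(x),\infty))=\max\{f(x)-u,0\}$, metric regularity of $E$ at $(\bar x,f(\bar x))$ is precisely the existence of $\kappa\ge 0$ and neighborhoods $U$ of $\bar x$ and $V$ of $f(\bar x)$ with
\[
d\bigl(x,f^{-1}((-\infty,u])\bigr)\le\kappa\,\max\{f(x)-u,0\}\quad\text{for all }x\in U,\ u\in V.
\]
When $f(x)\le u$ both sides vanish, so the whole content is to bound the distance to the sublevel set $f^{-1}((-\infty,u])$ by a fixed multiple of the excess $f(x)-u$ in the case $f(x)>u$.

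First I would extract a uniform descent direction from the hypothesis $0\notin\partial f(\bar x)$. As $f$ is locally Lipschitz, $\partial f(\bar x)$ is nonempty, convex and weak-$*$ compact, and by assumption it avoids $0$; letting $p$ be the projection of $0$ onto this set and putting $d:=p/\|p\|$, the variational inequality characterizing the projection gives $\langle s,d\rangle\ge\|p\|=:c>0$ for every $s\in\partial f(\bar x)$, so $-d$ is a descent direction. The key step is to promote this to a neighborhood: combining the local boundedness of $\partial f$ (from the Lipschitz estimate) with the weak-$*$ outer semicontinuity of the Clarke subdifferential \cite{Cla83}, a weak-limit argument---valid because bounded sets of a Hilbert space are weakly sequentially compact---shows that for some $r>0$ one has $\langle s,d\rangle\ge c/2$ for all $\xi\in\mathbb{B}(\bar x,r)$ and all $s\in\partial f(\xi)$.

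With this uniform descent I would apply Lebourg's mean value theorem along the segment from $x$ to $x-td$. Given $x$ near $\bar x$ and $u$ near $f(\bar x)$ with $f(x)>u$, set $t:=2(f(x)-u)/c$; continuity of $f$ forces $f(x)-u$, hence $t$, to be small, so by shrinking $U$ and $V$ the entire segment stays inside $\mathbb{B}(\bar x,r)$. Lebourg's theorem then produces a point $\xi$ on the segment and $s\in\partial f(\xi)$ with $f(x-td)-f(x)=\langle s,-td\rangle=-t\langle s,d\rangle\le-tc/2=-(f(x)-u)$, whence $f(x-td)\le u$. Thus $x-td\in f^{-1}((-\infty,u])$ and $d\bigl(x,f^{-1}((-\infty,u])\bigr)\le t\|d\|=(2/c)(f(x)-u)$, which is exactly the required bound with $\kappa=2/c$.

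The step I expect to be the main obstacle is the uniform descent estimate in the second paragraph. In $\mathbb{R}^n$ one could invoke norm upper semicontinuity of $\partial f$ directly, but in a general Hilbert space one must work with the weak-$*$ outer semicontinuity and pass to weak limits of subgradients, using local boundedness to guarantee weak compactness. The remaining bookkeeping---choosing the radii of $U$ and $V$ so that the descent segment never exits $\mathbb{B}(\bar x,r)$---is routine given the continuity of $f$, but should be carried out explicitly.
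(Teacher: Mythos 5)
Your proof is correct, and it reaches the conclusion by a genuinely different route than the paper. The paper invokes the Aubin criterion of \cite{DQZ06}: it bounds the inner norm $\|DE(x\mid y)^{-1}\|^{-}$ of the inverse contingent derivative uniformly over $(x,y)\in\gph(E)$ near $(\bar{x},f(\bar{x}))$, using a unit direction $d$ with $f^{0}(\bar{x};d)<-\mu$; the uniformity of the descent over nearby base points comes for free there because the Clarke directional derivative is a $\limsup$ over base points $y\to\bar{x}$ as well as $t\searrow0$. You instead unwind metric regularity of $E$ into the sublevel-set error bound $d\big(x,f^{-1}((-\infty,u])\big)\leq\kappa\max\{f(x)-u,0\}$ --- which, incidentally, is exactly the form in which the proposition is used later in \eqref{eq:metric-reg-line} --- extract a descent direction by projecting $0$ onto the weak* compact convex set $\partial f(\bar{x})$, make the descent uniform on a ball via local boundedness plus outer semicontinuity of $\partial f$, and then apply Lebourg's mean value theorem to exhibit a point of the sublevel set at distance at most $(2/c)(f(x)-u)$ from $x$. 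Both arguments pivot on the same mechanism (a uniform descent direction furnished by $0\notin\partial f(\bar{x})$), but yours is self-contained --- no contingent derivatives, no external regularity criterion --- and it yields the explicit constant $\kappa=2/c$ with $c=d(0,\partial f(\bar{x}))$, whereas the paper's proof is shorter modulo the cited Aubin criterion and stays within the graphical-derivative formalism. One streamlining remark: the step you flag as the main obstacle, namely the uniform estimate $\langle s,d\rangle\geq c/2$ for all $s\in\partial f(\xi)$ with $\xi$ near $\bar{x}$, follows at once from the upper semicontinuity of $\xi\mapsto f^{0}(\xi;-d)$ (see \cite[Proposition 2.1.1]{Cla83}, or argue directly from the $\limsup$ definition \eqref{eq:Clarke-dir-diff}, since $f^{0}(\bar{x};-d)\leq-c$); the weak-limit argument you propose is valid in a Hilbert space but not needed, and this shortcut is precisely where the paper gets its uniformity for free.
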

\begin{proof}
We make use of the Aubin criterion in \cite[Theorem 1.2]{DQZ06},
but we need to recall a few definitions. Let $X$ and $Y$ be Banach
spaces. For a positively homogeneous map $H:X\rightrightarrows Y$,
the\emph{ inner norm} $\|\cdot\|^{-}$ is defined as 
\[
\|H\|^{-}:=\sup_{x\in\mathbb{B}}\inf_{y\in H(x)}\|y\|.
\]
For a set-valued map $S:X\rightrightarrows Y$, consider $(\bar{x},\bar{y})$
such that $\bar{y}\in S(\bar{x})$, or $(\bar{x},\bar{y})\in\gph(S)$.
The\emph{ graphical (contingent) derivative} of $S$ at $(\bar{x},\bar{y})$
is defined by 
\[
\gph(DS(\bar{x}\mid\bar{y})):=T_{\scriptsize\gph(S)}(\bar{x},\bar{y}),
\]
where the tangent cone $T_{\scriptsize\gph(S)}(\bar{x},\bar{y})$
is defined as follows: $(u,v)\in T_{\scriptsize\gph(S)}(\bar{x},\bar{y})$
if and only if there exists sequences $t_{n}\searrow0$, $u_{n}\to u$
and $v_{n}\to v$ such that $\bar{y}+t_{n}v_{n}\in S(\bar{x}+t_{n}u_{n})$.

The Aubin criterion states that for Banach spaces $X$ and $Y$ and
a set-valued map $S:X\rightrightarrows Y$, $S(\cdot)$ is metrically
regular at $(\bar{x},\bar{y})$ if 
\begin{equation}
\limsup_{{(x,y)\to(\bar{x},\bar{y})\atop (x,y)\in\scriptsize\gph(S)}}\|DS(x\mid y)^{-1}\|^{-}\label{eq:aubin-crit-formula}
\end{equation}
is finite.

We now apply the Aubin criterion to our particular setting. Since
$0\notin\partial f(\bar{x})$, by the formulation of the Clarke subdifferential
using the Clarke directional derivative \eqref{eq:Clarke-dir-diff},
there exists a direction $d$, where $\|d\|=1$, such that $f^{0}(\bar{x},d)<-\mu$,
where $\mu>0$. This means that if $(x,t)$ are close enough to $(\bar{x},0^{+})$,
then 
\[
\frac{f(x+td)-f(x)}{t}<-\mu.
\]
This in turn implies that $(d,-\mu)\in\gph\big(DE\big(x\mid f(x)\big)\big)$.
In other words, 
\begin{equation}
\frac{1}{\mu}d\in DE\big(x\mid f(x)\big)^{-1}(-1).\label{eq:DE1}
\end{equation}
Since $(0,1)$ is a recession direction in $\gph(S)$, it is clear
that 
\begin{equation}
0\in DE\big(x\mid f(x)\big)^{-1}(1).\label{eq:DE2}
\end{equation}
Whenever $y>f(x)$ and $x$ is close enough to $\bar{x}$, the local
Lipschitz continuity of $f(\cdot)$ at $\bar{x}$ ensures that $(x,y)$
is in the interior of the epigraph of $f$, from which we get 
\begin{equation}
0\in DE(x\mid y)^{-1}(1)\mbox{ and }0\in DE(x\mid y)^{-1}(-1)\mbox{ whenever }y>f(x).\label{eq:DE3}
\end{equation}
The formulas \eqref{eq:DE1}, \eqref{eq:DE2} and \eqref{eq:DE3}
combine to give us $\|DE(x\mid y)^{-1}\|^{-}\leq1/\mu$ for all $(x,y)\in\gph(E)$
close enough to $(\bar{x},f(\bar{x}))$. Thus the Aubin criterion
applies to give us the metric regularity of $E(\cdot)$ at $(\bar{x},f(\bar{x}))$. 
\end{proof}

\section{Finitely convergent algorithm for the NIP}

We now present our algorithm for the NIP, and prove its finite convergence
in Theorem \ref{thm:Fin-conv-NIP}.
\begin{algorithm}
\label{alg:fin-conv-alg}(Finitely convergent algorithm for NIP) Let
$X$ be a Hilbert space. Consider a function $f:X\to\mathbb{R}$,
a point $x_{0}$ and a sequence $\{\epsilon_{i}\}$ of strictly decreasing
positive numbers converging to zero. This algorithm seeks to find
a point $x^{\prime}$ such that $f(x^{\prime})<0$.

\textbf{Step 0:} Set $i=0$.

\textbf{Step 1:} Find $s_{i}^{(j)}\in\partial f(x_{i})$ for $j=1,\dots,J_{i}$,
where $J_{i}$ is some finite number. Let $x_{i}^{(j)}$ be $x_{i}-\frac{\epsilon_{i}+f(x_{i})}{\|s_{i}^{(j)}\|^{2}}s_{i}^{(j)}$,
which is also the projection of $x_{i}$ onto the set ${\{x:f(x_{i})+\langle s_{i}^{(j)},x-x_{i}\rangle\leq-\epsilon_{i}\}}$.
Consider the polyhedron 
\[
P_{i}:=\big\{ x:f(x_{i})+\langle s_{i}^{(j)},x-x_{i}\rangle\leq-\epsilon_{i}\mbox{ for all }j\in\{1,\dots,J_{i}\}\big\}.
\]
The next iterate $x_{i+1}$ is obtained by projecting $x_{i}$ onto
$P_{i}$.

\textbf{Step 2:} Increase $i$ and go back to step 1 till convergence.
\end{algorithm}
Before proving Theorem \ref{thm:Fin-conv-NIP}, we recall a simple
principle that will be used in the proof there.
\begin{prop}
\label{prop:Proj-on-polyh}(Projections onto polyhedra) Let $F$ be
a polyhedron in a Hilbert space $X$ such that 
\[
F:=\bigcap_{i=1}^{k}\{x:\langle x,a_{i}\rangle\leq b_{i}\},
\]
where $a_{i}\in X$ and $b_{i}\in\mathbb{R}$ for $i\in\{1,\dots,k\}$.
Choose a point $x_{0}$ and let $x_{1}:=P_{F}(x_{0})$. Then for any
$y\in F$, we have $\langle x_{0}-x_{1},y-x_{1}\rangle\leq0$.
\end{prop}
We now prove that Algorithm \ref{alg:fin-conv-alg} can converge in
finitely many iterations to such a point $x^{\prime}$. Our proof
is an extension of the proof in \cite{Fukushima82}.
\begin{thm}
\label{thm:Fin-conv-NIP}(Finite convergence of Algorithm \ref{alg:fin-conv-alg})
Let $X$ be a Hilbert space. Consider a locally Lipschitz function
$f:X\to\mathbb{R}$. Let $\bar{x}$ be such that 
\begin{enumerate}
\item $f(\bar{x})=0$,
\item $0\notin\partial f(\bar{x})$, and
\item $f(\cdot)$ is approximately convex at $\bar{x}$. 
\end{enumerate}
Suppose also that the strictly decreasing sequence $\{\epsilon_{i}\}_{i}$
converges to zero at a sublinear rate (i.e., slower than any linearly
convergent sequence). There is a neighborhood $U$ of $\bar{x}$ and
a number $\bar{\epsilon}$ such that if $x_{0}\in U$ and $\epsilon_{0}<\bar{\epsilon}$,
then Algorithm \ref{alg:fin-conv-alg} converges in finitely many
iterations. (i.e., $f(x_{i})\leq0$ for some $i$.)\end{thm}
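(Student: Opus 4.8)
The plan is to run Fukushima's Fej\'er-monotonicity argument, with Proposition~\ref{prop:Metric-reg-epigph} and approximate convexity standing in for global convexity and a Slater point. First I would pass to a ball $\mathbb{B}(\bar{x},\delta)$ on which $f$ is $L$-Lipschitz and on which, by outer semicontinuity of the Clarke subdifferential together with $0\notin\partial f(\bar{x})$, one has $0\notin\partial f(x)$ for all $x$; this makes Step~1 well defined (no $s_i^{(j)}$ vanishes) and gives the bound $\|s_i^{(j)}\|\le L$. For the Slater-type point, Proposition~\ref{prop:Metric-reg-epigph} yields the error bound $d(x,\{f\le u\})\le\kappa\,(f(x)-u)_{+}$ near $(\bar{x},0)$; taking $x=\bar{x}$ and $u=-2\eta$ produces a point $x^{*}$ with $f(x^{*})\le-2\eta$ and $\|x^{*}-\bar{x}\|\le 2\kappa\eta$, so for $\eta$ small this $x^{*}$ sits well inside the ball. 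Finally I would shrink $\delta$ so that, via Definition~\ref{def:approx-convex}, the approximate-convexity error $\epsilon\|y-x\|$ is dominated by $\eta$ throughout $\mathbb{B}(\bar{x},\delta)$.

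The next step is to show that this single reference point satisfies every cut, and this is exactly where approximate convexity replaces convexity. For any iterate $x_i\in\mathbb{B}(\bar{x},\delta)$ and any $s_i^{(j)}\in\partial f(x_i)$, Definition~\ref{def:approx-convex} gives
\[
f(x_i)+\langle s_i^{(j)},x^{*}-x_i\rangle\le f(x^{*})+\epsilon\|x^{*}-x_i\|\le-2\eta+\eta=-\eta.
\]
Hence, as long as $\epsilon_i\le\eta$ for all $i$ (guaranteed once $\epsilon_0<\bar{\epsilon}\le\eta$, since $\{\epsilon_i\}$ decreases), $x^{*}$ obeys every defining inequality of $P_i$, so $x^{*}\in P_i$; in particular $P_i\ne\emptyset$ and $x_{i+1}$ is well defined.

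I would then apply Proposition~\ref{prop:Proj-on-polyh} with $y=x^{*}\in P_i$ to get $\langle x_i-x_{i+1},x^{*}-x_{i+1}\rangle\le0$, and expanding $\|x_i-x^{*}\|^{2}$ yields
\[
\|x_{i+1}-x^{*}\|^{2}\le\|x_i-x^{*}\|^{2}-\|x_{i+1}-x_i\|^{2}.
\]
This has to be run as an induction: the inequality keeps $x_{i+1}$ in $\mathbb{B}(x^{*},\|x_0-x^{*}\|)$, so by choosing the neighborhood $U$ (hence $\|x_0-\bar{x}\|$) small enough the entire orbit stays in $\mathbb{B}(\bar{x},\delta)$ --- which is precisely what the two previous paragraphs required, closing the loop. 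Since $P_i$ is contained in the single halfspace $\{x:f(x_i)+\langle s_i^{(j)},x-x_i\rangle\le-\epsilon_i\}$, the projection distance obeys $\|x_{i+1}-x_i\|\ge(\epsilon_i+f(x_i))/\|s_i^{(j)}\|\ge\epsilon_i/L$ whenever $f(x_i)>0$.

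For finite termination, suppose the algorithm never terminates, i.e. $f(x_i)>0$ for every $i$. Telescoping the Fej\'er inequality and inserting the step bound gives $\|x_n-x^{*}\|^{2}\le\|x_0-x^{*}\|^{2}-L^{-2}\sum_{i<n}\epsilon_i^{2}$ for all $n$, whence $\sum_i\epsilon_i^{2}\le L^{2}\|x_0-x^{*}\|^{2}<\infty$; this contradicts the assumption that $\{\epsilon_i\}$ tends to $0$ only at a sublinear rate. Concretely, the sublinear-rate hypothesis is what I would invoke to force $\sum_i\epsilon_i^{2}=\infty$, and confirming this implication against the precise meaning of ``sublinear'' used here is the one elementary point I would pin down. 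I expect the real crux to be the second and third paragraphs together: forcing a \emph{single} reference point $x^{*}$ into \emph{every} nonconvex polyhedron $P_i$ through the approximate-convexity estimate, while simultaneously keeping the whole orbit inside the ball on which that estimate and the subgradient bounds hold. Coordinating the four parameters $\delta$, $\eta$, the approximate-convexity modulus $\epsilon$, and the size of $U$ so that this bootstrap closes is the delicate part; once it does, the convex-style telescoping runs essentially as in \cite{Fukushima82}.
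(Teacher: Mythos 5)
Your argument is sound up to the point where you conclude $\sum_i\epsilon_i^2\le L^2\|x_0-x^*\|^2<\infty$: the construction of the Slater-type point $x^*$ via Proposition \ref{prop:Metric-reg-epigph}, the verification $x^*\in P_i$ via approximate convexity, the Fej\'er inequality from Proposition \ref{prop:Proj-on-polyh}, and the resulting localization of the orbit are all fine. The gap is the very last step, which you flagged but hoped was elementary: sublinear convergence of $\{\epsilon_i\}$ does \emph{not} imply $\sum_i\epsilon_i^2=\infty$. Take $\epsilon_i=1/i$: for every $c\in(0,1)$ and $M>0$ one has $Mic^i\to0$, so eventually $1/i>Mc^i$, i.e. this sequence converges slower than every linearly convergent sequence; yet $\sum_i1/i^2<\infty$. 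For such a sequence your chain of inequalities is perfectly consistent with the algorithm never terminating, so no contradiction arises; your argument proves the theorem only for sequences with non-summable squares, which is not the stated hypothesis.

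The missing idea --- the engine of the paper's proof --- is to measure progress not against a fixed reference point but against the moving sublevel sets $S_{\epsilon_i}=f^{-1}((-\infty,-\epsilon_i])$. Metric regularity is used twice, not once: besides producing a feasible point, it gives the error bound $d(x_i,S_{\epsilon_i})\le\kappa[f(x_i)+\epsilon_i]$, which shows the step length, bounded below by $[f(x_i)+\epsilon_i]/L$, is at least a constant multiple $r\,d(x_i,S_{\epsilon_i})$ of the \emph{current} distance. Plugging this into a Fej\'er-type inequality taken with respect to the projection $p_i$ of $x_i$ onto $S_{\epsilon_i}$ (the paper must shrink the polyhedron $P_i$ by a factor of $2/3$ about $x_i$ to guarantee $p_i$ lies in it, because approximate convexity only yields the supporting inequality up to the error $\epsilon_{ac}\|p_i-x_i\|$) produces
\[
d(x_{i+1},S_{\epsilon_{i+1}})\le\sqrt{1-\tfrac{1}{3}r^2}\,d(x_i,S_{\epsilon_i}),
\]
i.e. a linear rate of decrease. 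This is then contradicted by the Lipschitz lower bound $d(x_i,S_{\epsilon_i})\ge\epsilon_i/(L+\epsilon_{ac})$, which decays sublinearly by hypothesis. That rate comparison is precisely what the sublinearity assumption is for; a summability argument of the kind you run cannot substitute for it.
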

\begin{proof}
Seeking a contradiction, we assume $f(x_{i})>0$ for all $i$. Our
proof is broken up into several parts.

\textbf{Claim 1: }There is a neighborhood $U$ of $\bar{x}$ and $\bar{\epsilon}>0$
such that if $\tilde{x}_{1}\in U$ and $f(\tilde{x}_{1})>0$, then
for any $\tilde{\epsilon}\in(0,\bar{\epsilon}]$ and $s^{(j)}\in\partial f(\tilde{x}_{1})$,
where $j\in\{1,\dots,J\}$, the projection of $\tilde{x}_{1}$ onto
the polyhedron 
\begin{equation}
P:=\big\{ x:f(\tilde{x}_{1})+\langle s^{(j)},x-\tilde{x}_{1}\rangle\leq-\tilde{\epsilon}\mbox{ for all }j\in\{1,\dots,J\}\big\}\label{eq:poly-P}
\end{equation}
 lies in $U$.

By the Clarke directional derivative \eqref{eq:Clarke-dir-diff} of
$f(\cdot)$ at $\bar{x}$, since $0\notin\partial f(\bar{x})$, there
exists a direction $d$, where $\|d\|=1$, and $\mu>0$ such that
\[
\limsup_{t\to0}\frac{1}{t}[f(\bar{x}+td)-f(\bar{x})]<-\mu.
\]
 In particular, this implies that if $\bar{t}$ is small enough,
then $f(\bar{x}+td)<f(\bar{x})-\mu t=-\mu t$ for all $t\in[0,\bar{t}]$.
 Then by the approximate convexity of $f(\cdot)$ at $\bar{x}$,
for any $\epsilon_{ac}>0$, there is a neighborhood $U_{1}$ of $\bar{x}$
such that 
\[
f(y)\geq f(x)+\langle s^{(j)},y-x\rangle-\epsilon_{ac}\|y-x\|\mbox{ for all }x,y\in U_{1}\mbox{ and }j\in\{1,\dots,J\}.
\]

To simplify our notation, we let $S_{\epsilon}:=f^{-1}((-\infty,-\epsilon])$
just like in \cite{Fukushima82}. Recall $E(\cdot)$, the epigraphical
map of $f(\cdot)$ defined in Proposition \ref{prop:Metric-reg-epigph},
is metrically regular at $\bar{x}$. This means that by lowering $\bar{\epsilon}$
if necessary, there is a $\kappa\in[\bar{\kappa},\bar{\kappa}+1]$,
a $\delta>0$ and a neighborhood $U_{2}$ of $\bar{x}$ such that
if $x\in U_{2}$ and $\epsilon<\bar{\epsilon}$, then 
\begin{equation}
d(x,S_{\epsilon})=d\big(x,E^{-1}(-\epsilon)\big)\leq\kappa d\big(E(x),-\epsilon\big)=\kappa[f(x)+\epsilon].\label{eq:metric-reg-line}
\end{equation}
We impose the following requirements on $\epsilon_{ac}$, $\bar{t}$
and $\bar{\epsilon}$. 
\begin{enumerate}
\item [(R1)] Let $\epsilon_{ac}>0$ be small enough so that $2\epsilon_{ac}<\mu$
and $(\bar{\kappa}+1)\epsilon_{ac}<\frac{1}{3}$.
\item [(R2)] Let $\bar{t}>0$ be small enough so that 
\begin{eqnarray*}
 &  & \mathbb{B}(\bar{x}+\bar{t}d,2\bar{t})\subset U_{1}\cap U_{2},\\
 & \mbox{and} & L:=\sup_{s\in\partial f(\mathbb{B}(\bar{x}+\bar{t}d,2\bar{t}))}\|s\|\mbox{ is finite}.
\end{eqnarray*}

\item [(R3)] Let $\bar{\epsilon}>0$ be small enough so that $\bar{\epsilon}+2\epsilon_{ac}\bar{t}<\bar{t}\mu$.
\item [(R4)] Reduce $\bar{t}$ and $\bar{\epsilon}$ if necessary so that
(R2) and (R3) holds, and\\
 ${\mathbb{B}(\bar{x}+\bar{t}d,2\bar{t}+[\bar{\kappa}+1][3L\bar{t}+\bar{\epsilon}])\subset U_{1}}$.
\end{enumerate}
The finiteness of $L$ in (R2) is possible for some $\bar{t}>0$ by
making use of \cite[Proposition 2.1.2(a)]{Cla83} and the fact that
$f$ is locally Lipschitz at $\bar{x}$. We can now prove Claim 1
for $U=\mathbb{B}(\bar{x}+\bar{t}d,2\bar{t})$. We will only need
(R1)-(R3) for now, and the significance of (R4) will be explained
in Claim 2. Consider any $\tilde{x}_{1}\in\mathbb{B}(\bar{x}+\bar{t}d,2\bar{t})$
such that $f(\tilde{x}_{1})>0$. For any $s\in\partial f(\tilde{x}_{1})$,
we have 
\begin{eqnarray*}
 &  & f(\tilde{x}_{1})+\left\langle s,\frac{-[\tilde{\epsilon}+f(\tilde{x}_{1})]}{\|s\|^{2}}s\right\rangle \\
 & = & -\tilde{\epsilon}\\
 & > & -\bar{t}\mu+\epsilon_{ac}2\bar{t}\mbox{ (Using (R3) and the fact that }\tilde{\epsilon}\leq\bar{\epsilon}\mbox{.)}\\
 & > & f(\bar{x}+\bar{t}d)+\epsilon_{ac}2\bar{t}\\
 & \geq & f(\tilde{x}_{1})+\langle s,(\bar{x}+\bar{t}d)-\tilde{x}_{1}\rangle-\epsilon_{ac}\|[\bar{x}+\bar{t}d]-\tilde{x}_{1}\|+\epsilon_{ac}2\bar{t}\\
 & \geq & f(\tilde{x}_{1})+\langle s,(\bar{x}+\bar{t}d)-\tilde{x}_{1}\rangle\mbox{ (since }\tilde{x}_{1}\in\mathbb{B}(\bar{x}+\bar{t}d,2\bar{t})\mbox{)}.
\end{eqnarray*}
This implies that $\left\langle s^{(j)},\frac{-[\tilde{\epsilon}+f(\tilde{x}_{1})]}{\|s^{(j)}\|^{2}}s^{(j)}+\tilde{x}_{1}-[\bar{x}+\bar{t}d]\right\rangle \geq0$
for all $j\in\{1,\dots,J\}$. Let $\tilde{v}^{(j)}=\frac{[\tilde{\epsilon}+f(\tilde{x}_{1})]}{\|s^{(j)}\|^{2}}s^{(j)}$.
We have 
\begin{equation}
\big\langle\tilde{x}_{1}-[\tilde{x}_{1}-\tilde{v}^{(j)}],[\bar{x}+\bar{t}d]-[\tilde{x}_{1}-\tilde{v}^{(j)}]\big\rangle\leq0.\label{eq:angle-bigger-pi-2}
\end{equation}
In other words, the angle $\angle\tilde{x}_{1}[\tilde{x}_{1}-\tilde{v}^{(j)}][\bar{x}+\bar{t}d]\geq\pi/2$.

The polyhedron $P$ in \eqref{eq:poly-P} can also be written as 
\[
P=\big\{ x:\langle x-[\tilde{x}_{1}-\tilde{v}^{(j)}],s^{(j)}\rangle\leq0\mbox{ for all }j\in\{1,\dots,J\}\big\}.
\]
In view of \eqref{eq:angle-bigger-pi-2} and the above discussion,
the point $\bar{x}+\bar{t}d$ lies in $P$. The projection of $\tilde{x}_{1}$
onto $P$, say $\tilde{x}_{2}$, creates a hyperplane that separates
$\tilde{x}_{1}$ and $\bar{x}+\bar{t}d$. In other words, $\angle\tilde{x}_{1}\tilde{x}_{2}[\bar{x}+\bar{t}d]\geq\pi/2$.
This in turn implies that we have $\|\tilde{x}_{2}-[\bar{x}+\bar{t}d\|\leq\|\tilde{x}_{1}-[\bar{x}+\bar{t}d]\|$.
In other words, $\tilde{x}_{1}\in\mathbb{B}(\bar{x}+\bar{t}d,2\bar{t})$
implies $\tilde{x}_{2}\in\mathbb{B}(\bar{x}+\bar{t}d,2\bar{t})$.
This ends the proof of Claim 1 with $U=\mathbb{B}(\bar{x}+\bar{t}d,2\bar{t})$.

It is easy to see that this implies that if $x_{0}\in\mathbb{B}(\bar{x}+\bar{t}d,2\bar{t})$,
then the iterates $x_{i}$ generated by Algorithm \ref{alg:fin-conv-alg}
lie in $\mathbb{B}(\bar{x}+\bar{t}d,2\bar{t})$ as well, provided
the starting $\epsilon_{0}$ is smaller than $\bar{\epsilon}$.

\textbf{Claim 2}: Let $p_{i}:=P_{S_{\epsilon_{i}}}(x_{i})$, the projection
of $x_{i}$ onto $S_{\epsilon_{i}}$. If $x_{i}\in\mathbb{B}(\bar{x}+\bar{t}d,2\bar{t})$,
then $p_{i}$ lies in $U_{1}$.

From \eqref{eq:metric-reg-line}, we have 
\begin{equation}
\|p_{i}-x_{i}\|=d(x_{i},S_{\epsilon_{i}})\leq\kappa[f(x_{i})+\epsilon_{i}]\leq[\bar{\kappa}+1][f(x_{i})+\epsilon_{i}].\label{eq:metric-reg-bdd2}
\end{equation}
Since $x_{i}\in\mathbb{B}(\bar{x}+\bar{t}d,2\bar{t})$, we have $\|x_{i}-\bar{x}\|\leq3\bar{t}$.
It is well known that the constant $L$ in (R2) is also an upper bound
on the Lipschitz constant of $f$ in $\mathbb{B}(\bar{x}+\bar{t}d,2\bar{t})$
(for example, through the Mean Value Theorem in \cite[Theorem 2.3.7]{Cla83}
or \cite{Leb75}), so $f(x_{i})$ is bounded from above by $3L\bar{t}$.
Hence 
\[
\|p_{i}-(\bar{x}+\bar{t}d)\|\leq\|x_{i}-(\bar{x}+\bar{t}d)\|+\|p_{i}-x_{i}\|\leq2\bar{t}+[\bar{\kappa}+1][3L\bar{t}+\bar{\epsilon}].
\]
By (R4), we can see that $p_{i}\in U_{1}$ as needed. This ends the
proof of Claim 2.

\textbf{Claim 3:} The sequence $\{d(x_{i},S_{\epsilon_{i}})\}_{i}$
converges at least linearly to $0$.

From the continuity of $f(\cdot)$, it is clear that $f(p_{i})=-\epsilon_{i}$.
For the choice $s_{i}^{(j)}\in\partial f(x_{i})$, we recall that
$x_{i},p_{i}\in U_{1}$, and get 
\begin{eqnarray*}
f(p_{i}) & \geq & f(x_{i})+\langle s_{i}^{(j)},p_{i}-x_{i}\rangle-\epsilon_{ac}\|p_{i}-x_{i}\|\\
\langle s_{i}^{(j)},p_{i}-x_{i}\rangle & \leq & f(p_{i})-f(x_{i})+\epsilon_{ac}\|p_{i}-x_{i}\|\\
 & = & -\epsilon_{i}-f(x_{i})+\epsilon_{ac}\|p_{i}-x_{i}\|.
\end{eqnarray*}
Recall $x_{i}^{(j)}=x_{i}-\frac{[\epsilon_{i}+f(x_{i})]}{\|s\|^{2}}s_{i}^{(j)}$.
Let 
\begin{equation}
\tilde{x}_{i}^{(j)}:=\frac{2}{3}x_{i}^{(j)}+\frac{1}{3}x_{i}.\label{eq:scale-x-i-j}
\end{equation}
It is easy to check that $\langle s_{i}^{(j)},x_{i}^{(j)}-x_{i}\rangle=-\epsilon_{i}-f(x_{i})$
and $\langle s_{i}^{(j)},\tilde{x}_{i}^{(j)}-x_{i}\rangle=-\frac{2}{3}[\epsilon_{i}+f(x_{i})]$.

From \eqref{eq:metric-reg-bdd2} and the preceeding discussion, we
have 
\begin{eqnarray}
\frac{\langle s_{i}^{(j)},p_{i}-x_{i}\rangle}{\langle s_{i}^{(j)},\tilde{x}_{i}^{(j)}-x_{i}\rangle} & \geq & \frac{[\epsilon_{i}+f(x_{i})]-\epsilon_{ac}\|x_{i}-p_{i}\|}{\frac{2}{3}[\epsilon_{i}+f(x_{i})]}\label{eq:explain-0}\\
 & \geq & \frac{\|x_{i}-p_{i}\|/[\bar{\kappa}+1]-\epsilon_{ac}\|x_{i}-p_{i}\|}{\frac{2}{3}\|x_{i}-p_{i}\|/[\bar{\kappa}+1]}\nonumber \\
 & = & \frac{3}{2}\big[1-[\bar{\kappa}+1]\epsilon_{ac}\big].\nonumber 
\end{eqnarray}
In view of $[\bar{\kappa}+1]\epsilon_{ac}<1/3$ in (R1), the ratio
$\frac{3}{2}[1-[\bar{\kappa}+1]\epsilon_{ac}]$ is greater than $1$.
Since $\tilde{x}_{i}^{(j)}-x_{i}$ is in the direction of $-s_{i}^{(j)}$,
the angle $\angle p_{i}\tilde{x}_{i}^{(j)}x_{i}$ is greater than
$\pi/2$. (See Figure \ref{fig:explain}.) In other words, the point
$p_{i}$ is in the polyhedron 
\[
\tilde{P}_{i}:=\big\{ x:\langle x-\tilde{x}_{i}^{(j)},s_{i}^{(j)}\rangle\leq0\mbox{ for all }j\in\{1,\dots,J_{i}\}\big\}.
\]

\begin{figure}[!h]
\includegraphics[scale=0.3]{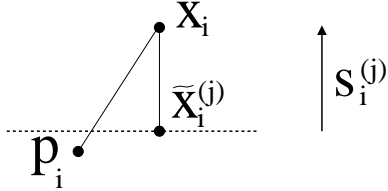}\caption{\label{fig:explain}This figure explains the setup in \eqref{eq:explain-0}
and why $\angle p_{i}\tilde{x}_{i}^{(j)}x_{i}>\pi/2$. }

\end{figure}
Let the projection of $x_{i}$ onto $\tilde{P}_{i}$ be $\tilde{x}_{i+1}$.
It is clear to see that $\tilde{P}_{i}$ is the polyhedron created
by scaling $P_{i}$ about $x_{i}$ with a factor of $2/3$ by \eqref{eq:scale-x-i-j}.
Thus $\tilde{x}_{i+1}=\frac{2}{3}x_{i+1}+\frac{1}{3}x_{i}$. We can
also infer (using the principle in Proposition \ref{prop:Proj-on-polyh})
that 
\begin{equation}
\angle x_{i}\tilde{x}_{i+1}p_{i}\geq\pi/2.\label{eq:angle-bigger-than-pi-2}
\end{equation}
Let $\tilde{p}_{i}$ be the projection of $p_{i}$ onto the line connecting
$x_{i+1}$ and $x_{i}$. We can infer from \eqref{eq:angle-bigger-than-pi-2}
that $\tilde{x}_{i+1}$ must lie between $\tilde{p}_{i}$ and $x_{i}$.
Thus 
\begin{eqnarray}
\|x_{i+1}-p_{i}\|^{2} & = & \|x_{i}-p_{i}\|^{2}-\|\tilde{p}_{i}-x_{i}\|^{2}+\|\tilde{p}_{i}-x_{i+1}\|^{2}\label{eq:1-3-ineq}\\
 & \leq & \|x_{i}-p_{i}\|^{2}-\|\tilde{x}_{i+1}-x_{i}\|^{2}+\|\tilde{x}_{i+1}-x_{i+1}\|^{2}\nonumber \\
 & = & \|x_{i}-p_{i}\|^{2}-\frac{4}{9}\|x_{i+1}-x_{i}\|^{2}+\frac{1}{9}\|x_{i+1}-x_{i}\|^{2}\nonumber \\
 & = & \|x_{i}-p_{i}\|^{2}-\frac{1}{3}\|x_{i+1}-x_{i}\|^{2}.\nonumber 
\end{eqnarray}
We now bound the distance $\|x_{i+1}-x_{i}\|$.  From \eqref{eq:metric-reg-line},
we have 
\[
\|p_{i}-x_{i}\|\leq\kappa[f(x_{i})+\epsilon_{i}]=\kappa\left\Vert \frac{[f(x_{i})+\epsilon_{i}]}{\|s_{i}^{(1)}\|^{2}}s_{i}^{(1)}\right\Vert \|s_{i}^{(1)}\|\leq\kappa\|x_{i}-x_{i}^{(1)}\|\sup_{s^{\prime}\in\partial f(x_{i})}\|s^{\prime}\|.
\]
We let $r:=1/[\kappa\sup_{s^{\prime}\in\partial f(\mathbb{B}(\bar{x}+\bar{t}d,2\bar{t}))}\|s^{\prime}\|]$.
Then we have 
\begin{equation}
\|x_{i}-x_{i}^{(1)}\|\geq r\|p_{i}-x_{i}\|.\label{eq:x-1-ineq}
\end{equation}
Since $x_{i}^{(1)}$ is the projection of $x_{i}$ onto the halfspace
\[
H_{i}^{(1)}:=\{x:\langle x-x_{i}^{(1)},s_{i}^{(1)}\rangle\leq0\},
\]
$x_{i+1}$ is the projection of $x_{i}$ onto $P_{i}$, and $P_{i}\subset H_{i}^{(1)}$,
we must have $\|x_{i+1}-x_{i}\|\geq\|x_{i}-x_{i}^{(1)}\|$. Combining
with \eqref{eq:1-3-ineq} and \eqref{eq:x-1-ineq}, we have 
\begin{eqnarray*}
\|x_{i+1}-p_{i}\|^{2} & \leq & \|x_{i}-p_{i}\|^{2}-\frac{1}{3}\|x_{i+1}-x_{i}\|^{2}\\
 & \leq & \|x_{i}-p_{i}\|^{2}-\frac{1}{3}r^{2}\|x_{i}-p_{i}\|^{2}\quad=\quad\left[1-\frac{1}{3}r^{2}\right]\|x_{i}-p_{i}\|^{2}.
\end{eqnarray*}
Hence 

\[
d(x_{i+1},S_{\epsilon_{i+1}})\leq d(x_{i+1},S_{\epsilon_{i}})\leq\sqrt{1-\frac{1}{3}r^{2}}\|x_{i}-p_{i}\|=\sqrt{1-\frac{1}{3}r^{2}}d(x_{i},S_{\epsilon_{i}}),
\]
which gives at least a linear rate of decrease of $\{d(x_{i},S_{\epsilon_{i}})\}_{i}$.
This ends the proof of Claim 3.

\textbf{Claim 4}: The sequence $\{d(x_{i},S_{\epsilon_{i}})\}_{i}$
converges to $0$ at a sublinear rate, contradicting Claim 3.

Recall $p_{i}=P_{S_{\epsilon_{i}}}(x_{i})$. We have 
\begin{eqnarray*}
-\epsilon_{i} & = & f(p_{i})\\
 & \geq & f(x_{i})+\langle s_{i},p_{i}-x_{i}\rangle-\epsilon_{ac}\|p_{i}-x_{i}\|\\
 & \geq & \langle s_{i},p_{i}-x_{i}\rangle-\epsilon_{ac}\|p_{i}-x_{i}\|\\
 & \geq & -[\|s_{i}\|+\epsilon_{ac}]\|p_{i}-x_{i}\|,
\end{eqnarray*}
which gives $\|x_{i}-p_{i}\|\geq\frac{\epsilon_{i}}{\|s_{i}\|+\epsilon_{ac}}$,
so 
\[
d(x_{i},S_{\epsilon_{i}})=\|x_{i}-p_{i}\|\geq\frac{\epsilon_{i}}{\|s_{i}\|+\epsilon_{ac}}\geq\frac{\epsilon_{i}}{L+\epsilon_{ac}}.
\]
This implies that the sequence $\{d(x_{i},S_{\epsilon_{i}})\}$ converges
at a sublinear rate, which contradicts Claim 3. This ends the proof
of Claim 4.

Thus, the sequence $\{x_{i}\}$ has to terminate finitely.\end{proof}
\begin{acknowledgement}
I thank Shawn Wang and Heinz Bauschke for conversations about the
material in this paper.
\end{acknowledgement}
\bibliographystyle{amsalpha}
\bibliography{../refs}

\end{document}